\pdfoutput=1
\documentclass[11pt]{amsart}
\usepackage{epsfig,overpic}

\textwidth=5.5in
\textheight=8in
\oddsidemargin=0.5in
\evensidemargin=0.5in
\topmargin=.5in

\newtheorem{thm}{Theorem}[section]
\newtheorem{lem}[thm]{Lemma}

\theoremstyle{definition}

\newtheorem{note}[thm]{Note}

\theoremstyle{remark}

\newcommand{\R}{\mathbf{R}}
\newcommand{\Z}{\mathbf{Z}}
\newcommand{\cn}{\colon}
\newcommand{\ol}[1]{{\overline #1}}
\newcommand{\la}{\langle}
\newcommand{\ra}{\rangle}
 
\newcommand{\f}{\partial}
\newcommand{\w}{\wedge}
\newcommand{\AG}{{\mathbf{AG}}}
\newcommand{\C}{\mathcal{C}}

\renewcommand{\S}{\mathbf{S}}
\renewcommand{\(}{\left(}
\renewcommand{\)}{\right)}
\renewcommand{\tilde}{\widetilde}
\renewcommand{\div}{\operatorname{div}}

\DeclareMathOperator{\Area}{Area}
\DeclareMathOperator{\Length}{Length}
\DeclareMathOperator{\vol}{vol}

\begin{document}

\title[Total diameter and area]{Total diameter  and area of closed submanifolds}

\author[M.~Ghomi]{Mohammad Ghomi}
\address{School of Mathematics, Georgia Institute of Technology,
Atlanta, GA 30332}
\email{ghomi@math.gatech.edu}
\urladdr{www.math.gatech.edu/$\sim$ghomi}

\author[R.~Howard]{Ralph Howard}
\address{Department of Mathematics,
University of South Carolina,
Columbia, SC 29208}
\email{howard@math.sc.edu}
\urladdr{www.math.sc.edu/$\sim$howard}

\keywords{Mean chord length, isoperimetric inequality,  knot energy, winding number, linking number, closed curve, generalized area.}
\subjclass{Primary 53A07, 52A20; Secondary 58Z05, 52A38.}
\date{Last Typeset \today.}
\thanks{Research of the first named author was supported in part by NSF Grants DMS--1308777, DMS--0806305, and Simons Collaboration Grant 279374.}

\begin{abstract} 
The total diameter of a closed planar  curve $C\subset\R^2$
is the integral of its antipodal chord lengths. We show that this quantity is
bounded below by twice the area of $C$. Furthermore, when $C$ is convex or
centrally symmetric, the lower bound is twice as large. Both inequalities are
sharp and the equality holds in the convex case only when $C$ is a circle. We
also generalize these results to $m$ dimensional submanifolds of $\R^n$, where
the ``area"   will be defined in terms of the mod $2$ winding numbers of the
submanifold  about the $n-m-1$ dimensional affine subspaces  of $\R^n$.
\end{abstract}

\maketitle

\section{Introduction}
Integrals of chord lengths of closed curves in Euclidean space $\R^n$ are natural geometric quantities which have been studied since
Crofton (see Note \ref{note:BP} for historical background). More recently basic
inequalities involving these integrals  have been  used \cite{abrams:circle}
to settle conjectures of Freedman-He-Wang \cite{FHW:mobius} and O'Hara
\cite{ohara:energy1} on knot energies. See also \cite{EHL} for other
applications  to problems in physics. A fundamental result in this area
\cite[Cor. 3.2]{abrams:circle} \cite{gabor:mean} is  the  sharp inequality:
\begin{equation}\label{eq:0}
\int_0^L\left\|f(t+L/2)-f(t)\right\|\,dt\leq \frac{L^2}{\pi},
\end{equation}
where $f\colon\R/L\Z\to\R^n$ is a closed curve  parametrized by arc length.
Here, by contrast,  we develop sharp \emph{lower} bounds for the above integral, which we call the total diameter of $f$.
To describe these results,  let $M$ be a closed Riemannian $m$-manifold which has \emph{antipodal symmetry}, i.e., it admits a fixed point free isometry $\phi\colon M\to M$ such that $\phi^2$ is the identity map.
 Then, for every $p\in M$, we set 
$
p^*:=\phi(p),
$
and define the   \emph{total diameter}
of any mapping $f\colon M\to\R^n$ as
$$
TD(f):=\int_{M}\left\|f(p^*)-f(p)\right\|\,dp,
$$
which generalizes the integral in \eqref{eq:0}. Further note that the integrand here is the length of the line segment $f(p)f(p^*)$ which we call an \emph{antipodal chord} of $f$.
Next, to bound this quantity from below, we define the ``area" of $f$ as follows. Let $\mathbf{AG}=\mathbf{AG}_{n,n-m-1}$ denote the Grassmannian space of  affine $n-m-1$ dimensional subspaces of $\R^n$. There exists an invariant measure $dA$ on $\AG$ such that for any smooth compact $m+1$ dimensional embedded submanifold $S\subset\R^n$, the  $(m+1)$-volume of $S$ coincides with the integral   over all $\lambda\in\AG$ of the cardinality of $\lambda\cap S$, see \cite[p. 245]{santalo}.
Let $w_2(f,\lambda)$ denote the winding number mod $2$ of $f$ about $\lambda$. Then the \emph{area} of $f$ is defined as
 $$
A(f):=\int_{\lambda\in\AG} w_2(f, \lambda)\, dA,
$$
which is a variation on a similar notion studied by Pohl \cite{Pohl:Integral-formulas}, and
Banchoff and Pohl \cite{BP},  see Note \ref{note:BP}.
Note that $w_2(f,\lambda)$ is well-defined whenever $\lambda$ is disjoint from $f(M)$, and consequently $A(f)$ is well-defined when $f(M)$ has measure zero (e.g., $f$ is smooth).
Furthermore, when $f(M)$ is a closed embedded hypersurface, i.e., $m=n-1$ and $f$ is injective, $A(f)$ is simply the volume of the  region  enclosed by $f(M)$. 
\begin{thm}\label{thm:main}
Let $M$ be a closed $m$-dimensional Riemannian manifold with antipodal symmetry, and $f\colon M\to\R^n$ be a $\C^1$ isometric immersion. Then
\begin{equation}\label{eq:1}
 TD(f) \geq 2A(f),
 \end{equation}
 and equality holds if and only if $f(p)=f(p^*)$ for all $p\in M$ (i.e.,  both sides of the inequality vanish).   Furthermore, if $f$ is centrally symmetric or convex, then
\begin{equation}\label{eq:2}
TD(f) \geq 2(m+1) A(f),
\end{equation}
and equality holds if and  only if $f$ is a sphere. 
\end{thm}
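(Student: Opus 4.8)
The plan is to bound $A(f)$ from above by the volume of an $(m{+}1)$-dimensional object built from the antipodal chords, and then bound that volume by $TD(f)$ with Hadamard's inequality. Introduce the \emph{chord map} $G\colon M\times[0,1]\to\R^n$, $G(p,t):=(1-t)f(p)+t\,f(p^*)$, whose image $\Sigma$ is swept out by the antipodal chords, and the free involution $\sigma(p,t):=(p^*,1-t)$. Since $G\circ\sigma=G$, the set $G^{-1}(\lambda)$ is $\sigma$-invariant and $\sigma$ is fixed-point free, so $\#G^{-1}(\lambda)$ is even for every $\lambda\in\AG$. The conceptual heart of the argument is the topological lemma: \emph{if $\lambda$ is disjoint from $\Sigma$, then $w_2(f,\lambda)=0$.} I would prove this by observing that radial projection $\pi_\lambda\colon\R^n\setminus\lambda\to\S^m$ makes $\pi_\lambda\circ G$ well defined, and since it is $\sigma$-invariant it descends to $\bar F\colon Q:=(M\times[0,1])/\sigma\to\S^m$. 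Here $Q$ is a compact $(m{+}1)$-manifold whose boundary, under the gluing $(p,0)\sim(p^*,1)$, is a single copy of $M$ on which $\bar F$ restricts to $\pi_\lambda\circ f$. As a map $M\to\S^m$ extending over $Q$ has vanishing mod-$2$ degree (bordism invariance), $w_2(f,\lambda)=\deg_2(\pi_\lambda\circ f)=0$. Equivalently, for generic $\lambda$ one gets $w_2(f,\lambda)\le\tfrac12\#G^{-1}(\lambda)$.

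For inequality \eqref{eq:1} I would integrate this against $dA$ and invoke the Crofton formula in its parametrized form (the generalization of the cited Santal\'o identity), $\int_{\AG}\#G^{-1}(\lambda)\,dA=\int_{M\times[0,1]}J_G$, where $J_G=\|\f_tG\w\f_{p_1}G\w\cdots\w\f_{p_m}G\|$ in a local orthonormal frame $e_1\cd e_m$. This yields $A(f)\le\tfrac12\int J_G$. Hadamard's inequality gives $J_G\le\|\f_tG\|\prod_i\|\f_{p_i}G\|$; because $f$ and $\phi$ are isometric, $\|\f_{p_i}G\|=\|(1-t)df(e_i)+t\,d(f\circ\phi)(e_i)\|\le1$, while $\|\f_tG\|=\|f(p^*)-f(p)\|$. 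Hence $\int_0^1 J_G\,dt\le\|f(p^*)-f(p)\|$, and integrating over $M$ gives $A(f)\le\tfrac12 TD(f)$. Tracing equality back through Hadamard forces all chords to degenerate, i.e.\ $f\equiv f\circ\phi$, as claimed.

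For the centrally symmetric case of \eqref{eq:2} I would place the center at the origin, so $f(p^*)=-f(p)$ and every chord passes through $0$. Then the \emph{cone} $C(p,t):=t\,f(p)$ is a genuine mod-$2$ filling of $f(M)$, and the same extension principle gives $A(f)\le\int_{M\times[0,1]}J_C$. The radial scaling produces $J_C=t^m\|f(p)\w df(e_1)\w\cdots\w df(e_m)\|$, so the factor $\int_0^1 t^m\,dt=\tfrac1{m+1}$ appears, and Hadamard bounds the wedge by $|f(p)|=\tfrac12\|f(p^*)-f(p)\|$. This gives $A(f)\le\tfrac1{2(m+1)}TD(f)$; equality requires $f(p)$ to be normal to $df(T_pM)$, forcing $|f|$ constant, i.e.\ a round sphere.

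The convex case is where I expect the real difficulty, and it splits by dimension. For $m\ge2$ I would reduce to the centrally symmetric case by \emph{rigidity}: an intrinsic isometric involution of a convex hypersurface extends to a rigid motion $\Phi$ of $\R^{m+1}$ preserving the convex body $K$, hence fixing its centroid; being fixed-point free on $\f K$, the linear part of $\Phi$ admits no $+1$-eigenvector, so it equals $-\id$ and $\Phi$ is the central reflection, making $f$ centrally symmetric. The genuinely separate and hardest case is $m=1$ (convex planar curves), where curves are flexible and no central symmetry is forced. Here I would write $\Area(K)\le\int_0^{L/2}\!\!\int_0^1|J_\Gamma|\,dr\,ds$ for $\Gamma(s,r)=(1-r)f(s)+r\,f(s+L/2)$ (whose chords cover $K$ by the lemma), and compute $\int_0^1|J_\Gamma|\,dr$ from the scalars $a:=c\w f'(s)$, $b:=c\w f'(s+L/2)$ with $c=f(s+L/2)-f(s)$. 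The decisive point is that \emph{convexity forces $a$ and $b$ to have opposite signs}, since the tangents at the two endpoints lie on opposite sides of the chord; then $\int_0^1|J_\Gamma|\,dr=\frac{a^2+b^2}{2(|a|+|b|)}\le\frac{\|c\|}2$, using $|a|,|b|\le\|c\|$. Integrating gives $\Area(K)\le\tfrac14 TD(f)$, with equality (chord normal to both tangents and $|a|=|b|$) only for the circle.
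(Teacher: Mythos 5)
Your proposal is correct and follows essentially the same route as the paper: the same topological lemma for the chord map (your null-bordism over $(M\times[0,1])/\sigma$ is an equivalent packaging of the paper's mod-2 degree argument through the double cover $M\to M/\phi$), the same Crofton-plus-Hadamard derivation of \eqref{eq:1}, the same $1/(m+1)$ factor in the centrally symmetric case, the same reduction of convex $m\geq 2$ to the symmetric case via Pogorelov--Sen$'$kin rigidity, and the same opposite-orientation observation and integral lemma $\int_0^1|(1-s)a-sb|\,ds\leq\tfrac12\max\{a,b\}$ for convex curves. The only differences are cosmetic (cone $t f(p)$ versus the chord map $(1-2t)f(p)$, and the centroid/eigenvalue argument versus the paper's affine-hull-of-midpoints argument, where you should note that $\Phi^2=\id$ --- which follows since $\Phi^2$ fixes $\f K$ pointwise --- is needed to conclude that a linear isometry with no $+1$-eigenvector is $-\id$).
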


Here \emph{centrally symmetric}   means that, after a translation, $f(p) = -f(p^*)$ for all $p\in M$. By \emph{convex} we mean that $f$ traces injectively the boundary of a convex set in an $m+1$ dimensional affine subspace of $\R^n$. Further, when  this set is  a ball, we say that $f$ is a \emph{sphere}.
 Note that, when $m=1$, we may identify $M$ with the circle $\R/L\Z$, in which case \eqref{eq:2} yields
 $$
\int_0^L\left\|f(t+L/2)-f(t)\right\|\,dt\geq 4 A(f) 
 $$
 for convex planar curves $f\colon \R/L\Z\to\R^2$ parametrized by arc length.
 This together with \eqref{eq:0} in turn yields $A(f)\leq L^2/(4\pi)$, which is the classical isoperimetric inequality. For another quick application of Theorem \ref{thm:main}, note that if $D$ denotes the diameter of a planar curve $f$, or the maximum length of all its chords, then  
 $LD\geq TD(f)$ and thus
 \eqref{eq:2} implies that 
$
LD\geq 4A(f)
$
when $f$ is convex or centrally symmetric. In the convex case, this is a classical inequality due to  Hayashi \cite{hayashi,scott}. 
Another interesting feature of the above theorem is that  equality in \eqref{eq:1} is never achieved when $f$ is simple or injective; however, we will show in Section \ref{sec:example} that the strict inequality is still sharp for simple curves. More specifically,  we construct a family of simple closed planar curves $f_n$ such that $TD(f_n)/A(f_n)\to 2$ as $n\to\infty$. This also shows that  \eqref{eq:2} does not hold without the convexity or the symmetry conditions.

The proof of Theorem \ref{thm:main} unfolds as follows. First, in Section
\ref{sec:toplem}, we use some basic degree theory to show that each affine
space $\lambda\in \AG$ with $w_2(f,\lambda)\neq 0$ intersects an antipodal
chord of $f$.  Then in Section \ref{sec:ProofOf2} we integrate the Jacobian of
a natural map parametrizing the antipodal chords of $f$ to obtain \eqref{eq:1}
fairly quickly; see also Note \ref{note:geometric} for an intuitive geometric
proof of \eqref{eq:1} for planar curves. The same Jacobian technique also
yields the proof of the symmetric case of \eqref{eq:2} in Section
\ref{sec:symmetric} with a bit more work. Next we consider the convex case of
\eqref{eq:2} in Section \ref{sec:convex}. Here, when $m\geq 2$, the rigidity
of convex hypersurfaces (see Lemma \ref{lem:rigid}) reduces the problem to the
symmetric case already solved in the Section \ref{sec:symmetric}. The case of
$m=1$ or convex planar curves, on the other hand, requires more work, and
surprisingly enough constitutes the hardest part of Theorem \ref{thm:main}.

\begin{note}[Regularity of curves in Theorem \ref{thm:main}] 
In the case where
$m=1$, or $f\colon M\simeq\R/L\Z\to\R^n$ is a closed curve, the regularity
requirement for $f$ in Theorem \ref{thm:main} may be relaxed. In fact it
suffices to assume in this case that $f$ is rectifiable and  parametrized by
arc length, i.e., for every interval $I\subset\R$, the length of $f(I)$
coincides with that of $I$. Then $f$ will be Lipschitz and thus absolutely
continuous. So,  by a theorem of Lebesgue,   it is differentiable almost
everywhere and satisfies  the fundamental theorem of calculus. Consequently,
all  arguments below apply to $f$ once it is understood that the expressions
involving $f'(t)$ are meant to hold for almost all $t$. 
\end{note}

\begin{note}[Historical  background]\label{note:BP}
The first person to study integrals of chord lengths of planar curves
seems to have been Crofton in the remarkable papers
\cite{Crofton:Prob,Crofton:local} where he  considers 
the length of chords cut off by a random line, and also the powers of these lengths.  Furthermore these papers give the invariant measure on
the space of lines in $\R^2$, that is the measure $dA$ on $\mathbf{AG}_{2,1}$ mentioned above.
For more on these results and their history 
see Santal\'o's book  \cite[Chap.~4]{santalo}.
Using winding numbers to generalize the notion of  volume enclosed by a simple 
curve or surface seems to have
originated in the works of Rad\'o \cite{Rado:Lemma,Rado:Surface-Area,Rado:Book}, where he considers 
nonsimple curves in $\R^2$ and the maps from the two dimensional sphere into
$\R^3$.  
The idea of relating integrals of linking numbers with affine subspaces to
integrals of chord lengths for curves in Euclidean spaces is due to Pohl
\cite{Pohl:Integral-formulas}.  The  generalization of these linking
integrals to higher dimensional submanifolds, and pointing out that they
extend the notion of enclosed volume to higher codimensions, appear  in the paper of Banchoff and Pohl \cite{BP}.
In contrast to our definition of $A(f)$ above, Banchoff and Pohl define
the area as $\int_{\lambda\in\AG} w^2(f,
\lambda) dA$, where $w(f,\lambda)$ is the winding number of $f$ about
$\lambda$ (which is well-defined only when $M$ is orientable). Using this
concept, they generalize the classical isoperimetric inequality to
nonsimple curves and higher dimensional (and codimensional) submanifolds.
\end{note}

%%%%%%%%%%%%%%%%%%%%%%%%%%%%%%%%%%%%%%%%%%%%%%%%%%
\section{A  Topological Lemma}\label{sec:toplem}
%%%%%%%%%%%%%%%%%%%%%%%%%%%%%%%%%%%%%%%%%%%%%%%%%%
The proofs of both inequalities in Theorem \ref{thm:main} hinge on the
following purely topological fact. Let us first review  the general definition
of winding number mod~$2$. Here $M$ denotes  a closed topological
$m$-manifold, $f\colon M\to\R^n$ is a continuous map, and $\lambda\subset
\R^n-f(M)$ is an  $n-m-1$ dimensional affine subspace. Let
$\lambda'\subset\R^n$ be an $m+1$ dimensional affine subspace which is
orthogonal to $\lambda$, and $\pi\colon\R^n\to\lambda'$ be the orthogonal
projection. Then $\pi(\lambda)$ consists of a single point, say $o$, which is
disjoint from $\pi(f(M))$, and we set
$$
w_2(f,\lambda):=w_2(\pi\circ f, o).
$$
It remains then to define $w_2(\pi\circ f, o)$. To this 
end we may identify $\lambda'$ with $\R^{m+1}$ and assume 
that $o$ is the origin. Then $r:= \pi\circ f/\|\pi\circ f\|$ yields a mapping $M\to\S^m$, and we set 
$$
w_2(\pi\circ f, o):=\deg_2(r),
$$
the degree mod $2$ of $r$. If $r$  is smooth (which we may assume it is after a perturbation), $\deg_2(r)$ is simply the number of points mod $2$ in $r^{-1}(q)$ where $q$ is any regular value of $r$. Alternatively, $\deg_2(r)$ may be defined in terms of the $\Z_2$-homology of $M$. In particular  $w_2(f,\lambda)$ is well-defined even when $M$ is not orientable. See \cite[p. 124]{or:degree} for more background on mod $2$ degree theory.

We say that a topological manifold $M$ has \emph{antipodal symmetry} provided that there exists a fixed point free homeomorphism $\phi\colon M\to M$ with $\phi^2=id_M$. Then for every $p\in M$, the corresponding \emph{antipodal point} is $p^*:=\phi(p)$. An \emph{antipodal chord} of   $f\colon M\to\R^n$ is a line segment connecting $f(p)$ and $f(p^*)$ for some $p\in M$. 

\begin{lem}\label{lem:degree}
Let $M$ be a closed $m$-manifold with antipodal symmetry,  $f\colon M\to\R^n$ be a continuous map, and $\lambda\subset \R^n-f(M)$ be an  $n-m-1$ dimensional affine space such that 
 $w_2(f,\lambda)\neq 0$. Then  an antipodal chord of $f$   intersects $\lambda$.
\end{lem}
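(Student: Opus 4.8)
The plan is to reduce the statement to a Borsuk--Ulam type fact about the sphere-valued map that computes $w_2(f,\lambda)$, and then exploit the free involution $\phi$ through the double cover $M\to M/\phi$. First I would unwind the definition of the winding number: choosing the $m+1$ plane $\lambda'$ orthogonal to $\lambda$, the projection $\pi\colon\R^n\to\lambda'\cong\R^{m+1}$, and the point $o=\pi(\lambda)$ as origin, we have $\lambda=\pi^{-1}(o)$ and $w_2(f,\lambda)=\deg_2(r)$, where $r:=(\pi\circ f)/\|\pi\circ f\|\colon M\to\S^m$. The geometric heart of the reduction is the observation that the antipodal chord $f(p)f(p^*)$ meets $\lambda$ precisely when $o$ lies on the segment joining $\pi(f(p))$ and $\pi(f(p^*))$; since neither endpoint is $o$, this happens if and only if these two projected points are negative multiples of one another, i.e.\ if and only if $r(p^*)=-r(p)$ on $\S^m$.

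So it suffices to show: if $\deg_2(r)\ne 0$, then $r(p^*)=-r(p)$ for some $p$. I would argue by contradiction, assuming $r(p)+r(p^*)\ne 0$ for every $p$. This lets me define the symmetrized map
$$
s(p):=\frac{r(p)+r(p^*)}{\|r(p)+r(p^*)\|}\colon M\to\S^m,
$$
which is manifestly $\phi$-invariant, so it factors as $s=\bar s\circ q$ through the quotient double cover $q\colon M\to \bar M:=M/\phi$. On the other hand, the straight-line-on-the-sphere homotopy $t\mapsto ((1-t)r(p)+t\,r(p^*))/\|\cdots\|$ is well defined for $t\in[0,1/2]$ (the numerator vanishes only when $r(p)=-r(p^*)$, which we have excluded) and connects $r$ to $s$; hence $\deg_2(r)=\deg_2(s)$. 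But a map that factors through a double cover has vanishing mod $2$ degree: since $q$ is $2$-to-$1$ we have $\deg_2(q)=0$, and functoriality of $\deg_2$ (equivalently, of the induced maps on $H_m(\,\cdot\,;\Z_2)$) gives $\deg_2(s)=\deg_2(\bar s)\cdot\deg_2(q)=0$. Therefore $\deg_2(r)=0$, contradicting $w_2(f,\lambda)\ne 0$.

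The routine parts are the projection bookkeeping and the verification that the homotopy never passes through the origin. The step I expect to carry the real content is the assertion that $s$, factoring through $M/\phi$, has trivial mod $2$ degree; this is where the free involution is essential, and I would justify it purely homologically, since in this lemma $M$ is only a topological manifold and $\phi$ only a homeomorphism, so regular-value counts are not directly available and $\deg_2$ must be read off the action on top $\Z_2$-homology. A minor point to address is connectedness: for $w_2$ to be a single well-defined class one works with $M$ connected (in which case $\bar M$ is a connected manifold and $q$ a connected double cover), the general case following componentwise.
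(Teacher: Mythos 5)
Your proposal is correct and follows essentially the same route as the paper's proof: argue by contradiction, use the no-chord-through-$\lambda$ hypothesis to build a $\phi$-invariant ``midpoint'' map homotopic to the sphere map computing $w_2(f,\lambda)$, factor it through the double cover $M\to M/\phi$, and kill the mod $2$ degree via the multiplication formula since $\deg_2$ of a double covering vanishes. The only (immaterial) difference is that you normalize to $\S^m$ before symmetrizing and homotoping, whereas the paper symmetrizes the projected map $\pi\circ f$ in $\R^{m+1}\setminus\{o\}$ first and normalizes at the end.
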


In particular note that, according to this lemma, any point in the region enclosed by a simple closed curve $C\subset\R^2$ intersects some antipodal chord of $C$.

\begin{proof}
Let $\pi\colon\R^n\to\lambda'$, and $o:=\pi(\lambda')$ be as discussed above. 
Suppose towards a contradiction that no antipodal chord of $f$ passes through $\lambda$. Then no antipodal chord of $\pi\circ f$ passes through $o$, and $w_2(\pi\circ f, o)=w_2(f,\lambda)\neq 0$.  Now set $f_0:=\pi\circ f$, identify $\lambda'$ with $\R^{m+1}$ and $o$ with the origin. 
Further,  define $f_1\colon M\to\R^{m+1}$ by 
$$
f_1(p):=\frac12\big(f_0(p)+f_0(p^*)\big).
$$
Note that $F\colon M\times I\to\R^{m+1}$ given by
$$
F(p,t):=\(1-\frac t2\) f_0(p)+\frac t2\,  f_0(p^*)
$$
gives a homotopy between $f_0$ and $f_1$ in the complement of $o$. Thus 
$$
w_2(f_1,o)=w_2(f_0,o)=w_2(f,\lambda)\neq 0.
$$ 
On the other hand, $f_1(p)=f_1(p^*)$. Thus if we set $\tilde M:=M/\phi$ and let $\pi\colon M\to\tilde M$ be the corresponding covering map, then $f_1$ induces a mapping $\tilde f_1\colon \tilde M\to\R^{m+1}$ such that $\tilde f_1\circ\pi=f_1$. Now let $r_1:=f_1/\|f_1\|$, $\tilde r_1:=\tilde f_1/\|\tilde f_1\|$. Then 
$$
\tilde r_1\circ\pi=r_1.
$$
 Further note that since $M$ is a double covering of $\tilde M$, $\deg_2(\pi)=0$. Thus the multiplication formula for mod 2 degree yields that
$$
w_2(f_1,o)=\deg_2(r_1)=\deg_2(\tilde r_1) \deg_2(\pi)=\deg_2(\tilde r_1) \cdot 0=0,
$$
and we have the desired  contradiction.
\end{proof}

%%%%%%%%%%%%%%%%%%%%%%%%%%%%%%%%%%
\section{Proof of  \eqref{eq:1}}\label{sec:ProofOf2}
%%%%%%%%%%%%%%%%%%%%%%%%%%%%%%%%%
Equipped with the topological lemma established above, we now proceed towards proving the first inequality in Theorem \ref{thm:main}. To this end, for any $p\in M$, let
$$
\ol f(p):=f(p^*)-f(p),
$$
be the \emph{antipodal vector} of $f$ at $p$, and define $F\cn M\times [0,1]\to \R^n$
by
$$
F(p,t):=(1-t)f(p) + tf(p^*)=f(p)+t\ol f(p).
$$
Note that $F$ covers each antipodal chord of $f$ twice, and thus by Lemma \ref{lem:degree}, intersects each  $\lambda\in\AG$ with   $w_2(\lambda,f)\neq 0$ at least twice. Consequently 
\begin{equation}\label{eq:2af}
2A(f)\leq \int_0^1\int_M J(F) dp \,dt,
\end{equation}
where $J(F)=J(F)(p,t)$ denotes the Jacobian of $F$.
To compute $J(F)$, let  $e_j$, $1\leq j\leq m$, be an orthonormal basis
of $T_pM$, and $\partial/\partial t$ be the standard basis for $[0,1]$. Then $\{e_j,\partial/\partial t\}$ forms an orthonormal basis for $T_{(p,t)}(M\times [0,1])$, and thus $J(F)$ is the volume of the parallelepiped, or the norm of the $(m+1)$-vector, spanned by the derivatives of $F$ with respect to $\{e_j,\partial/\partial t\}$; see \cite{morgan:book} or \cite{krantz&parks:book2} for more background on $m$-vectors and exterior algebra. 
More specifically, if we set
$$
F_j:=dF(e_j)\quad\text{and}\quad F_t:=dF\(\frac{\partial}{\partial t}\)=\frac{\partial F}{\partial t}=\ol f,
$$
then we have
\begin{equation}\label{eq:main}
J(F) =\| F_1\w\cdots\w F_m\w \ol f\|\le \| F_1\|  \cdots  \|F_m\| \|\ol f\|.
\end{equation}
Next note that since $f$ is an isometric immersion, and $\phi$ is an isometry,
$$
\epsilon_j:=df_p(e_j) \qquad \text{and} \qquad \epsilon^*_j:=d(f\circ\phi)_p(e_j)
$$
are each orthonormal as well, and we have
\begin{equation}\label{eq:dfej}
F_j=(1-t)df_p(e_j)+t\,d(f\circ\phi)_p(e_j) =(1-t)\epsilon_j+t\epsilon_j^*.
\end{equation}
Thus 
\begin{equation}\label{eq:dFejsquare}
\|F_j\|^2=1+2t(1-t)(\la \epsilon_j,\epsilon_j^*\ra-1)\leq 1.
\end{equation}
So it follows that
\begin{equation}\label{eq:JFptfbar}
J(F)(p,t)\le \|\ol f(p)\|,
\end{equation}
which in turn yields
\begin{equation*}
2A(f)\leq \int_0^1\int_M J(F)\,dp\,dt\le \int_M\|\ol f(p)\|\,dp=TD(f)
\end{equation*}
as desired. 
Next, to establish the sharpness of \eqref{eq:1},
suppose that  the first and last terms of the above expression are equal. Then the middle two terms will be equal as well. This in turn implies that equality holds in \eqref{eq:JFptfbar}. Now \eqref{eq:main} yields that equality must hold in \eqref{eq:dFejsquare}, which can happen only if $\langle\epsilon_j, \epsilon_j^*\rangle =1$. So $\epsilon_j=\epsilon_j^*$, and we have
$$
0=\epsilon_j^*-\epsilon_j=d(f\circ\phi)_p(e_i)-df_p(e_j)=d\ol f_p(e_j).
$$
Hence $\ol f\equiv a$ for some constant vector $a\in\R^{n}$. But $\ol f(p)=-\ol f(p^*)$. Thus $a=0$, which yields that $f(p^*)=f(p)$ as claimed.

\begin{note}[A geometric proof of \eqref{eq:1} for planar curves]\label{note:geometric}
Here we describe an alternate proof of \eqref{eq:1} for planar curves $f\colon \R/L\Z\to\R^2$, which is more elementary and transparent, but may not yield the sharpness of the inequality so easily.  Divide the circle $\R/L\Z$ into $2n$ arcs
$C_i$ of length $\Delta t:=L/2n$,
$i\in\Z/(2n\Z)$, and note
that $C_i^*=C_{i+n}$, i.e., $C_{i+n}$ is the antipodal reflection of $C_i$ given by the correspondence $t\mapsto t^*:=t+L/2$. Let $R_i$ be the  region covered by all the antipodal chords connecting $f(C_i)$ and $f({C_i}^*)$, and set $A_i:=\Area(R_i)$.
\begin{figure}[h]
\begin{center}
\begin{overpic}[height=1.3in]{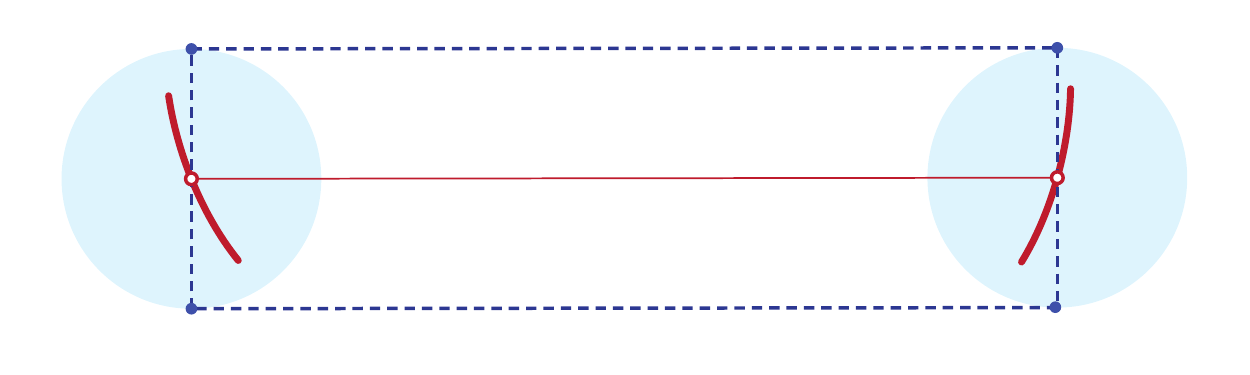} 
\put(6.5,14){\small$f(m_i)$}
\put(85,14){\small$f(m_i^*)$}
\end{overpic}
\caption{}
\label{fig:glasses2}
\end{center}
\end{figure}
 By Lemma~\ref{lem:degree}, $R_i$ covers all points $x\in\R^2$ with $w_2(f,x)\neq 0$, as $i$ ranges from $1$ to $n$. Thus 
\begin{equation}\label{eq:Ai}
A(f)\leq\sum_{i=1}^n A_i=\frac{1}{2}\sum_{i=1}^{2n} A_i.
\end{equation}
Next we are going to derive an upper bound for each $A_i$. Let $m_i$ denote
the midpoint of $C_i$, and consider the disk $D_i$ of radius $\Delta t/2$
centered at $f(m_i)$.  Note that, since $f$ is parametrized by arc length, $f(C_i)\subset D_i$. Now consider the diameters of $D_i$ and $D_i^*:=D_{i+n}$ which are orthogonal
to the antipodal chord $f(m_i)f(m_i^*)$, and let $R_i'$ be the rectangle formed by connecting the end
points of these diameters, see Figure \ref{fig:glasses2}. Then 
$$
R_i\subset
R_i'\cup D_i\cup D_{i}^*,
$$
because  $R_i'\cup D_i\cup D_{i}^*$ is a convex set; indeed it is the convex hull of $D_i\cup D_{i}^*$. Thus setting $A_i':=\Area(R_i')$, we have
$$
A_i\leq A_i'+\pi \left(\frac{\Delta t}{2}\right)^2=\|f(m_i)-f(m_i^*)\|\Delta t +\frac\pi4 (\Delta t)^2.
$$
So it follows that
$$
 A(f)\leq \frac{1}{2}\sum_i^{2n}\(\|f(m_i)-f(m_i^*)\|\Delta t +\frac\pi4 (\Delta t)^2\).
$$
  Taking the limit of the last expression as $\Delta t\to 0$   yields \eqref{eq:1}.
\end{note}

%%%%%%%%%%%%%%%%%%%%%%%%%%%%%%%%%%%%%%%%%%%%%%
\section{Proof of \eqref{eq:2} in the Centrally Symmetric Case}\label{sec:symmetric}
%%%%%%%%%%%%%%%%%%%%%%%%%%%%%%%%%%%%%%%%%%%%%%
Here we continue using the same notation established in the last section.
If $f$ in Theorem \ref{thm:main}  is symmetric, then after a translation we may assume that 
$
f(p^*)=-f(p)
$
for all $p\in M$, or $f\circ\phi =-f$ on $M$, which yields that 
$
\epsilon_j^*=-\epsilon_j.
$
Consequently, it follows from \eqref{eq:dfej} that 
\begin{equation}\label{eq:dFej}
F_j=(1-2t)\epsilon_j,
\end{equation}
 and thus by \eqref{eq:main} we have
\begin{equation}\label{eq:JFpt}
J(F)\le \| F_1\|  \cdots  \|F_m\|\left\|\ol f\right\|=|1-2t|^m\|\ol f\|.
\end{equation}
So
\begin{equation}\label{eq:intJF}
\int_0^1\int_M J(F)\,dp\,dt \le\int_0^1|1-2t|^m\,dt \int_M\|\ol f(p)\|\,dp 
=\frac{1}{m+1}\int_M\|\ol f(p)\|\,dp.
\end{equation}
This together with \eqref{eq:2af} yields
$$
2(m+1) A(f)\leq (m+1)\int_0^1\int_M J(F)\,dp\,dt \le\int_M\|\ol f(p)\|\,dp=TD(f)
$$
as claimed. 
To establish the sharpness of \eqref{eq:2},
suppose that the first and last terms of the above expression are equal. Then the middle two terms will be equal as well. This in turn yields that the first and last terms of  \eqref{eq:intJF} are equal, and so equality holds between the first two terms of \eqref{eq:intJF}. It then follows that  equality holds in \eqref{eq:JFpt}.  This can happen only if 
$$
\la\ol f,F_j\ra=0.
$$ 
But, since we have assumed that $f$ is symmetric with respect to the origin, $\ol f=2f$. Furthermore, by \eqref{eq:dFej},  $F_j$ is parallel to $\epsilon_j=df_p(e_j)$. 
Thus $f(p)$ is orthogonal to $df_p(e_j)$, which yields that
$$
\Big(e_j\(\|f\|^2\)\Big)(p)=2\langle f(p), df_p(e_j)\rangle=0.
$$
So $\|f\|$ is constant on $M$, which means that $f$ is a sphere. 

\begin{note}[A quick proof  of \eqref{eq:2} for centrally symmetric embedded hypersurfaces] 
Let $D$ be a bounded domain in $\R^n$ with $\C^1$
boundary $\f D$, and $X\colon\R^n\to\R^n$ be the position vector field given by $X(p):=p$.  Then
$\div X=n$. If $\nu$ is the outward normal along $\partial D$, then by the divergence theorem
$$
\vol(D)=\frac1n \int_D \div X\,dV = \frac1n\int_{\f D} \la X,\nu\ra\,dA
\le \frac1n \int_{\f D} \|X\|\,dA
$$
where $dV=dx^1\w \cdots\w dx^n$, and $dA$ is the surface area measure
on $\partial D$.
Thus if $M$ is a Riemannian manifold and $f\cn M\to \f D$ is an isometry,
then
$$
\vol(D)\le \frac1n \int_{M} \|f(p)\|\,dA.
$$
If $D$ is symmetric about the origin, $f(p)-f(p^*)=2f(p)$, and thus the above  inequality reduces to \eqref{eq:2} in the case where $m=n-1$, and $f$ is symmetric and injective.
\end{note}

%%%%%%%%%%%%%%%%%%%%%%%%%%%%%%%%%%%%%%%%%%%%%%
\section{Proof of \eqref{eq:2} in the Convex Case}\label{sec:convex}
%%%%%%%%%%%%%%%%%%%%%%%%%%%%%%%%%%%%%%%%%%%%%%
Here we need to treat the case of convex curves ($m=1$) separately from that of higher dimensional convex hypersurfaces $(m\geq 2)$, because convex hypersurfaces are rigid when $m\geq 2$, and consequently the argument here reduces to the symmetric case considered earlier; however, for convex curves (which are more flexible) we need to work harder.

\subsection{Convex hypersurfaces ($m\geq 2$)}
Recall that when we say $f\colon M\to\R^n$ is convex, we mean that $f$ maps $M$ injectively into the boundary of a convex subset of an $m+1$ dimensional affine subspace of $\R^n$, which  we may identify with $\R^{m+1}$.
Since we have already treated the symmetric case, it is enough to show that:

\begin{lem}\label{lem:rigid}
Let $M$ be a closed Riemannian $(m\geq 2)$-manifold with antipodal symmetry, and $f\colon M\to\R^{m+1}$ be a $\C^1$ isometric convex embedding.  Then $f$ is centrally symmetric.
\end{lem}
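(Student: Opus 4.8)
The plan is to exploit the fact that a closed convex $\C^1$ hypersurface in $\R^{m+1}$ with $m\geq 2$ is uniquely determined, up to rigid motion, by its induced intrinsic metric. Since $M$ carries the antipodal isometry $\phi$, and $f$ is an isometric embedding, the composition $f\circ\phi$ is a second isometric convex embedding of the same abstract Riemannian manifold $M$ into $\R^{m+1}$. The two embeddings $f$ and $f\circ\phi$ therefore have isometric images, so by rigidity they must differ by an ambient isometry: there is a rigid motion $\Psi$ of $\R^{m+1}$ with $f\circ\phi=\Psi\circ f$. The goal is then to analyze $\Psi$ and show that, after an appropriate translation of $f$, it is forced to be $x\mapsto -x$, which is exactly the statement that $f$ is centrally symmetric.

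First I would invoke the rigidity of convex hypersurfaces to produce the rigid motion $\Psi$ as above; this is the substantive geometric input, and I expect it to require care because the standard Cohn-Vossen/Pogorelov rigidity theorems are usually stated for smooth strictly convex surfaces, whereas here $f$ is only $\C^1$ and the convex body need not be strictly convex. The precise form of the rigidity result admissible under $\C^1$ regularity is what I would want to pin down. Second, having $\Psi$, I would use the involutivity $\phi^2=\id_M$: applying the relation twice gives $f=f\circ\phi^2=\Psi^2\circ f$, so $\Psi^2$ fixes every point of the image $f(M)$. Since $f(M)$ spans $\R^{m+1}$ affinely (it bounds a convex body with nonempty interior), a rigid motion fixing $f(M)$ pointwise must be the identity, so $\Psi^2=\id$, i.e.\ $\Psi$ is an involutive isometry.

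Next I would determine the structure of such an involution. Write $\Psi(x)=Qx+b$ with $Q\in O(m+1)$; then $\Psi^2=\id$ forces $Q^2=I$ and $(Q+I)b=0$. The key constraint is that $\phi$ is fixed point free: if $\Psi$ had a fixed point $x_0$, I would translate so that $x_0=0$, making $\Psi$ the orthogonal involution $Q$, and then examine its eigenspaces. The eigenvalue $+1$ eigenspace $V_+$ of $Q$ is exactly the fixed locus of $\Psi$; a point $p$ with $f(p)\in V_+$ would satisfy $f(p^*)=\Psi(f(p))=f(p)$, and combined with injectivity of $f$ this gives $p^*=p$, contradicting that $\phi$ is fixed point free. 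Hence $f(M)$ must avoid $V_+$ entirely; but $f(M)$ bounds a convex body with the fixed point $0$ of $\Psi$ in a natural position relative to it, and I would argue that avoiding $V_+$ while enclosing a symmetric body forces $V_+=\{0\}$, i.e.\ $Q=-I$. The main obstacle I anticipate is this last step: ruling out intermediate involutions (reflections in a proper subspace of dimension between $1$ and $m$) cleanly, and in particular correctly locating the center of symmetry so that the translation part $b$ is absorbed and $\Psi$ becomes exactly $x\mapsto -x$ after recentering $f$. Once $Q=-I$ and the center is placed at the origin, $f(p^*)=\Psi(f(p))=-f(p)$ holds for all $p$, which is precisely central symmetry.
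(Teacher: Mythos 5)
Your overall route is the same as the paper's: use rigidity of closed convex hypersurfaces to produce an ambient isometry $\Psi$ with $f\circ\phi=\Psi\circ f$, and then force $\Psi$ to be a point reflection. Your preliminary steps are correct, and in one respect more careful than the paper's: from $\phi^2=\id_M$ and the fact that $f(M)$ affinely spans $\R^{m+1}$ you deduce $\Psi^2=\id$, and an affine involution always has a fixed point (writing $\Psi(x)=Qx+b$ with $Q^2=I$ and $Qb=-b$, the point $b/2$ is fixed), which justifies translating so that $\Psi=Q$ is an orthogonal involution; the paper simply asserts that $\rho$ may be assumed linear after a translation. Your worry about $\C^1$ regularity of the rigidity theorem is resolved in the paper by citation: Pogorelov for convex surfaces in $\R^3$, and Sen$'$kin (via V\^{\i}lcu) for $\C^1$ convex hypersurfaces in $\R^{m+1}$.

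However, the step you yourself flag as the main obstacle --- excluding involutions whose fixed subspace $V_+$ has dimension between $1$ and $m$ --- is genuinely missing, and it is the heart of the lemma; ``the fixed point $0$ in a natural position'' and ``enclosing a symmetric body'' are not yet arguments. The paper closes exactly this gap with one observation: $\Psi$ swaps $f(p)$ and $f(p^*)$, since $\Psi(f(p^*))=f(\phi^2(p))=f(p)$; being affine, $\Psi$ therefore fixes every antipodal chord midpoint $\tfrac12\big(f(p)+f(p^*)\big)$. Equivalently, in your centered picture, $\tfrac12(x+Qx)$ is the orthogonal projection of $x=f(p)$ onto $V_+$, so all these midpoints lie in $V_+$. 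They also lie in the closed convex body bounded by $f(M)$, and they cannot lie on its boundary because $V_+\cap f(M)=\emptyset$; hence $V_+$ meets the interior of the body. If $\dim V_+\geq 1$, then $V_+$ contains a line through an interior point of a bounded convex body; that line must exit the body and so cross $f(M)$, contradicting $V_+\cap f(M)=\emptyset$. Therefore $\dim V_+=0$, so $Q=-I$ and $f(p^*)=-f(p)$ after recentering. (The paper phrases this via the affine hull $X$ of the chord midpoints, which is pointwise fixed by $\rho$, disjoint from $f(M)$, and trapped in the bounded enclosed region, forcing $\dim X=0$; it is the same argument.) With that paragraph supplied, your proof is complete and coincides with the paper's.
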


To prove the above lemma we need to recall the basic rigidity results for convex hypersurfaces. A (closed) \emph{convex hypersurface} is the boundary of a compact convex subset of $\R^{m+1}$ which has nonempty interior. We say that a class of convex hypersurfaces of $\R^{m+1}$ is \emph{rigid} if any isometry between a pair of members in that class can be extended to an isometry of $\R^{m+1}$. An equivalent formulation is that for every pair of convex isometric embeddings $f, g\colon M\to\R^{m+1}$, there exists an isometry $\rho$ of $\R^{m+1}$ such that $\rho\circ f=g$. That all convex surfaces in $\R^3$ are rigid is a classical result of Pogorelov \cite{pogorelov:book}. For $\C^1$ convex hypersurfaces this has also been established by Sen$'$kin \cite{senkin} in $\R^{m+1}$ according to V\^\i lcu \cite[Lem. 2]{vilcu}. A recent paper of Guan and Shin \cite{guan&shen} gives another proof of this fact in the $\C^2$ case. Finally see  \cite{spivak:v5} for a  classical argument for the rigidity of positively curved hypersurfaces.

\begin{proof}[Proof of Lemma \ref{lem:rigid}]
If $f$ is an isometric embedding of $M$ into $\R^{m+1}$, then so is $f\circ\phi$. Thus, by the theorems of Pogorelov and Sen$'$kin mentioned above, there exists an isometry $\rho$ of $\R^{m+1}$ such that $\rho\circ f(p)=f\circ\phi(p)=f(p^*)$ for all $p\in M$.  
In particular $\rho$ is fixed point free on $f(M)$, because $p\neq p^*$ and $f$ is injective. After a translation we may also assume that $\rho$ is linear.
Then,  
$$
\rho\(\frac{f(p)+f(p^*)}{2}\)=\frac{f(p^*)+f(p)}{2}.
$$
In other words, 
$\rho$ fixes the midpoint of each antipodal chord of $f$. Let $X$ be the affine hull of these midpoints. Then $\rho$ fixes each point of $X$. So $X$ may not intersect $f(M)$ because $\rho$ is fixed point free on $f(M)$. But, since $f(M)$ is convex, $X$ is contained in the region enclosed by $f(M)$. Consequently $X$ cannot contain any lines which means $\dim(X)=0$. So $X$ is a singleton, which implies that the midpoints of all antipodal chords of $f$ coincide. Hence $f$ is centrally symmetric.
\end{proof}

\subsection{Convex curves ($m=1$)}

Here we may identify $M$ with $\R/L\Z$. Further, similar to Section \ref{sec:ProofOf2}, we set
$$
 \ol f(t):=f(t^*)-f(t),
 $$
 where $t^*=t+L/2$, and 
 define $F\colon \R/L\Z\times[0,1]\to\R^2$ by 
   \begin{equation}\label{eq:F}
 F(t,s):=(1-s)f(t)+s\,f(t^*)=f(t)+s \,\ol f(t).
 \end{equation}
 A straight forward computation shows that 
\begin{gather*}\label{eq:Jac1}
 \|F_1\w F_2\|^2\\ \notag
 =(1-s)^2\|f'(t)\w\ol f(t)\|^2+s^2\|f'(t^*)\w\ol f(t)\|^2+2s(1-s)\big\langle f'(t)\w\ol f(t), f'(t^*)\w\ol f(t)\big\rangle,
 \end{gather*}
 where $F_1=F_1(t,s)$ and $F_2=F_2(t,s)$ denote the partial derivatives of $F$, and we may think of $\w$  as the cross product in $\R^3$.
The key observation here is that if $f$ is convex, then 
\begin{equation}\label{eq:key}
f'(t)\w\ol f(t)=-f'(t^*)\w\ol f(t).
\end{equation}
 This follows from the basic fact that when $f$ traces a convex planar curve, $f'(t)$ and $f'(t^*)$ point into the opposite sides of the line passing through $f(t)$ and $f(t^*)$, see Figure \ref{fig:rocks}, and thus $(f'(t), \ol f(t))$ and $(f'(t^*),\ol f(t))$ have opposite orientations as ordered bases of $\R^2$.

 \begin{figure}[h]
\begin{overpic}[height=1.2in]{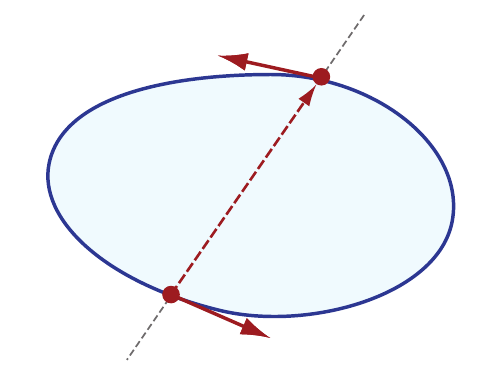}
\put(45,68){\small$f'(t^*)$}
\put(31,3){\small$f'(t)$}
\put(33,38){\small$\ol f(t)$}
\end{overpic}
\caption{}
\label{fig:rocks}
\end{figure}

The last two expressions yield that
$$
\|F_1\w F_2\|^2
 =\big((1-s)\|f'(t)\w\ol f(t)\|-s\|f'(t^*)\w\ol f(t)\|\big)^2.
$$
So  by \eqref{eq:2af}, we have
\begin{equation}\label{eq:2A}
2A(f)\le \int_0^L\int_0^1\big|(1-s)\|f'(t)\w \ol{f}(t)\| - s\|f'(t^*)\w \ol{f}(t)\|\big|\,ds\,dt.
 \end{equation}
 To estimate the above integral, we require the following fact:

\begin{lem}\label{lem:a-b-int}
Let $a,b>0$.  Then
$$
\int_0^1|(1-s)a-sb|\,ds \le\frac12 \max\{a,b\},
$$
with equality if and only if $a=b$.
\end{lem}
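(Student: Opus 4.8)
The plan is to evaluate the integral exactly and then compare the resulting closed form with the claimed bound. First I would observe that the integrand is the absolute value of the affine function $g(s):=(1-s)a-sb=a-(a+b)s$, which satisfies $g(0)=a>0$ and $g(1)=-b<0$. Since $a+b>0$, the function $g$ has a unique zero at $s_0:=a/(a+b)\in(0,1)$; it is nonnegative on $[0,s_0]$ and nonpositive on $[s_0,1]$. This lets me drop the absolute value on each subinterval and write
$$
\int_0^1|g(s)|\,ds=\int_0^{s_0}g(s)\,ds-\int_{s_0}^1 g(s)\,ds.
$$

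Next I would carry out these two elementary integrals of the linear function $g$. A direct computation yields $\int_0^{s_0}g(s)\,ds=a^2/\big(2(a+b)\big)$ and $-\int_{s_0}^1 g(s)\,ds=b^2/\big(2(a+b)\big)$, so that
$$
\int_0^1\big|(1-s)a-sb\big|\,ds=\frac{a^2+b^2}{2(a+b)}.
$$
With the integral in closed form, the stated inequality reduces to showing $\frac{a^2+b^2}{2(a+b)}\le\frac12\max\{a,b\}$, which after clearing the positive denominator is equivalent to $a^2+b^2\le(a+b)\max\{a,b\}$. Writing $M:=\max\{a,b\}$, this last bound is immediate, since $a^2\le aM$ and $b^2\le bM$; adding these gives the claim. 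Equality forces both $a^2=aM$ and $b^2=bM$, hence (using $a,b>0$) $a=M$ and $b=M$, which is exactly the condition $a=b$.

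The whole argument is a routine one-variable computation, so I do not expect a genuine obstacle. The only point that requires care is the need for the \emph{exact} evaluation of the integral in order to reach the sharp constant $1/2$: a naive estimate such as $\big|(1-s)a-sb\big|\le\max\{(1-s)a,\,sb\}$ followed by $\int_0^1\max\{1-s,s\}\,ds=\tfrac34$ only produces the weaker bound $\tfrac34\max\{a,b\}$. Thus the substantive step is computing $\int_0^1|g|$ precisely rather than bounding it, after which the comparison and the equality characterization are straightforward.
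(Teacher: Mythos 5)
Your proposal is correct and follows essentially the same route as the paper: both evaluate the integral exactly to the closed form $\frac{a^2+b^2}{2(a+b)}$ (the paper via the area under the graph, you via splitting at the zero $s_0=a/(a+b)$, which is the same computation) and then verify the algebraic inequality with its equality case. The only cosmetic difference is the final comparison, where the paper substitutes $\lambda=b/a$ while you use the slightly cleaner bound $a^2+b^2\le aM+bM$ with $M=\max\{a,b\}$; both are routine and yield the same equality characterization $a=b$.
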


\begin{proof}
Computing the area under the graph of $s\mapsto |(1-s)a-sb|$, see Figure \ref{pic:graph},  

\begin{figure}[h]
\begin{overpic}[height=1.25in]{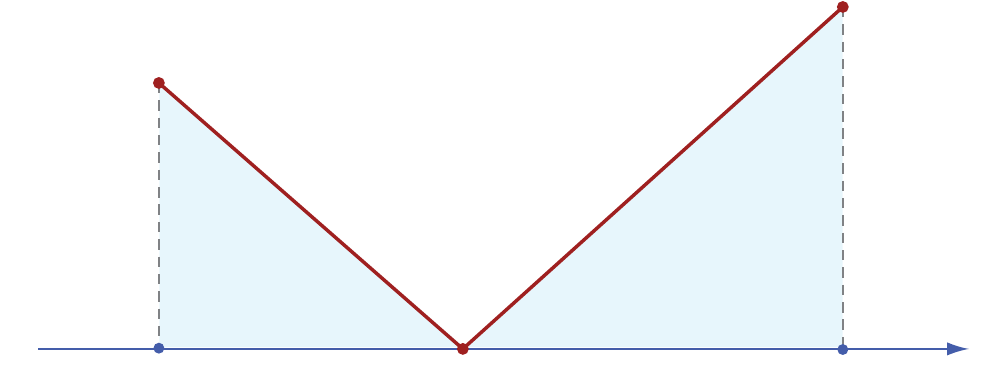}
\put(12,13){\small $a$}
\put(86,18){\small $b$}
\put(43,-2){$\frac{a}{a+b}$}
\put(15,-2){\small $0$}
\put(83.5,-2){\small $1$}
\put(98,1.5){\small $s$}
\end{overpic}
\caption{}
\label{pic:graph}
\end{figure}
\noindent
yields that
$$
\int_0^1|(1-s)a-sb|\,ds=\frac{1}{2}\(a\, \frac{a}{a+b}+b\(1-\frac{a}{a+b}\)\)=\frac{a^2+b^2}{2(a+b)}.
$$
By symmetry we may assume  $a\le b$.  Let $\lambda=b/a$.  Then
$$
\frac{a^2+b^2}{2(a+b)}=\frac a2\(\frac{1+\lambda^2}{1+\lambda}\)
=\frac a2\(\lambda -\frac{\lambda-1}{1+\lambda} \)
\le \frac a2\lambda=\frac12 b=\frac12\max\{a,b\}.
$$
and equality holds if and only if $\lambda =1$, that is when $a=b$.
\end{proof}

Using the last lemma in  \eqref{eq:2A}, and recalling that $\|f'\|\equiv 1$, we have
\begin{align*}
4A(f)&\le 2\int_0^L\int_0^1\big|(1-s)\|f'(t)\w \ol{f}(t)\| - s\|f'(t^*)\w \ol{f}(t)\|\big|\,ds\,dt\\
&\le \int_0^L\max\big\{ \|f'(t)\w \ol{f}(t)\|,  \|f'(t^*)\w \ol{f}(t)\|\big\}\,dt \\ 
&\le  \int_0^L\|\ol{f}(t)\|\,dt=TD(f) 
\end{align*}
as desired.  
Next to establish the sharpness of 
\eqref{eq:2}, note that if equality holds in \eqref{eq:2} then the first and last terms in the above expression are equal, and consequently all the intermediate terms are equal. In particular,  the equality between the integrals in the first and second lines yields that 
$$
\|f'(t)\w \ol{f}(t)\|=\|f'(t^*)\w \ol{f}(t)\|
$$ 
 via Lemma \ref{lem:a-b-int}. Consequently, the equality between the second and third lines yields that 
 $$
 \|f'(t)\w \ol{f}(t)\|=\|\ol f(t)\|=\|f'(t^*)\w \ol{f}(t)\|.
 $$
  Thus, since $\|f'\|\equiv1$, it follows that  
\begin{equation}\label{eq:key2}
\langle f'(t),\ol f(t)\rangle=0=\langle f'(t^*),\ol f(t)\rangle,
\end{equation}
which yields $f'(t)=\pm f'(t^*)$, and then  \eqref{eq:key} ensures that $f'(t)=-f'(t^*)$.  Consequently $o(t):=(f(t)+f(t^*))/2$ does not depend on $t$, i.e., $o'(t)=0$. Now if  we set $o:=o(t)$, then we have
$$
\frac{d}{dt}\|f(t)-o\|^2=\frac12\frac{d}{dt}\|\ol f(t)\|^2
=\la f'(t)-f'(t^*), \ol f(t)\ra=0,
$$
where the last equality again follows from \eqref{eq:key2}.
So  $f$ traces a circle centered at $o$.

 %%%%%%%%%%%%%%%%%%%%%%%%%%%%%%%%%%%%%%%%%%%%%%%%%%%%%%%%%%%%%%%%
  \section{Sharpness of  \eqref{eq:1} for Simple Curves}\label{sec:example}
  %%%%%%%%%%%%%%%%%%%%%%%%%%%%%%%%%%%%%%%%%%%%%%%%%%%%%%%%%%%%%%%%
Here we construct a one-parameter family of simple closed  curves $f_n\colon\R/L_n\Z\to\R^2$ parametrized by arc length 
 such that
\begin{equation}\label{eq:afn}
\lim_{n\to\infty}\frac{1}{A(f_n)}\int_0^{L_n}\|\ol f_n(t)\|\,dt=2,
\end{equation}
where $\ol f_n(t):=f_n(t^*)-f_n(t)$ and $t^*:=t+L_n/2$. This 
shows that  the constant $2$ in \eqref{eq:1} is in general sharp even for simple curves. Note that by \eqref{eq:1}, which we have already established, the above limit is  always $\geq 2$. Thus it suffices to find  $f_n$ such that 
this limit is $\leq 2$.
To this end, for $n=1,2,\dots$, let  each $f_n$  trace with unit speed a horseshoe shaped curve which consists of a pair of rectangular parts joined by concentric semicircles as depicted in Figure \ref{fig:horseshoe}.
 \begin{figure}[h]
\begin{center}
\begin{overpic}[height=1.3in]{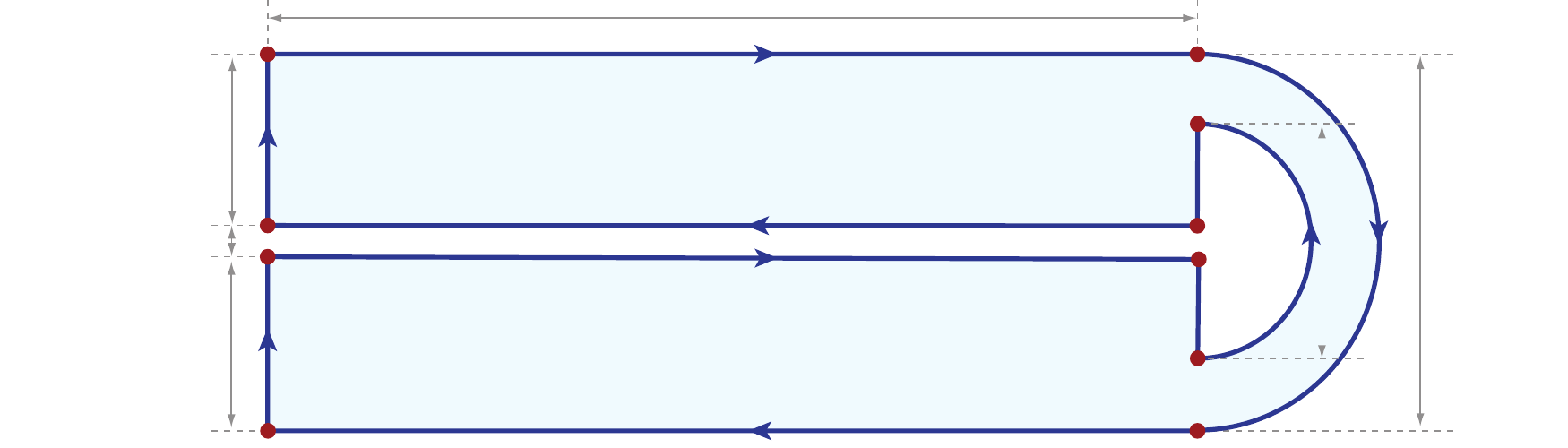}
    \put(10,6){$\frac{1}{n}$}
    \put(10,12.5){$\frac{1}{n^2}$}
    \put(10,19){$\frac{1}{n}$}
    \put(47,28){\small$1$}
      \put(84.5,12.5){\small $2r$}
     \put(91,12.5){\small $2R$}
     \put(18,22.5){$a$}
     \put(74,22.5){$b$}
     \put(18,15){$c$}
     \put(74,15){$d$}
     \put(18,9.5){$a^*$}
     \put(73.5,9.5){$b^*$}
     \put(18,2){$c^*$}
     \put(74,2){$d^*$}
    \end{overpic}
\caption{}\label{fig:horseshoe}
\end{center}
\end{figure}
The rectangular parts here have constant length $1$ and height $1/n$. Further, the vertical separation distance between them is $1/n^2$. In particular note that
$$
A(f_n)\geq \frac2n.
$$
Next note that if $R$ denotes the radius of the big semicircle, then we have
$$
R=\frac{1}{n}+\frac{1}{2n^2}\leq \frac{2}{n}.
$$
Let $a$, $b$, $c$, $d$ denote the corners of the top rectangle, and $a^*$, $b^*$, $c^*$, $d^*$ be the  corners of the bottom rectangle, as indicated in the figure. We want the corresponding points in these two sets  to be antipodal, i.e., $a^*=a+L_n/2$ and so on.  To this end it suffices to choose the radius $r$ of the small semicircle so that the length of the arc $bd^*$ is equal to that of the arc $b^*d$ (with respect to the orientation of the curve as indicated in the figure). The former quantity is $\pi R$ while the latter is $\pi r +2r-1/n^2$. Setting these values equal to each other, we obtain
$$
r:=\frac{\pi R+1/n^2}{\pi+2}.
$$
Now  it follows that for every point $x\in ab$, $x^*$ lies directly below it on  $a^*b^*$, and for every $x\in dc$, $x^*$ lies directly below it on  $d^*c^*$. Thus, if we let $C_n$ denote the trace of $f_n$, and $\ol C_n:=ab\cup dc\cup a^*b^*\cup d^*c^*$ be the  horizontal portions of $C_n$, then 
$$
\| \ol f_n\|=\frac{1}{n}+\frac{1}{2n^2}
$$
on $\ol C_n$. So we obtain the following estimate
$$
\int_{\ol C_n}\| \ol f_n\|= \Length(\ol C_n)\left(\frac{1}{n}+\frac{1}{2n^2}\right)=4 \left(\frac{1}{n}+\frac{1}{2n^2}\right)\leq 2 A(f_n) +\frac{2}{n^2}.
$$
Further we have
$$
\int_{C_n-\ol C_n}\|\ol f_n\|\leq \Length(C_n-\ol C_n)\max_{C_n-\ol C_n} \|\ol f_n\|\leq\left(4R+2\pi R\right)\sqrt{5}R\leq 23 R^2\leq \frac{92}{n^2}.
$$
The last two inequalities show that
$$
\int_0^{L_n}\| \ol f_n(t)\|\,dt=\int_{C_n} \|\ol f_n\|=\int_{\ol C_n}\| \ol f_n\|+\int_{C_n-\ol C_n}\|f_n\|\leq 2 A(f_n) +\frac{94}{n^2}.
$$ 
 So  it follows that
$$
\frac{1}{A(f_n)}\int_0^{L_n}\|\ol f_n(t)\|\,dt\leq 2+\frac{47}{n},
$$
which shows that the left hand side of  \eqref{eq:afn} is $\leq 2$ as desired.

\section*{Acknowledgements} 
We are grateful to Jaigyoung Choe who first suggested to us that inequality \eqref{eq:2} should hold for planar curves, and thus provided the initial stimulus for this work. We also thank Igor Belegradek for locating the reference \cite{senkin}. Finally, thanks to the anonymous referee for suggesting improvements to an earlier draft of this work.

\bibliographystyle{abbrv}
\bibliography{references}

\begin{thebibliography}{10}

\bibitem{abrams:circle}
A.~Abrams, J.~Cantarella, J.~H.~G. Fu, M.~Ghomi, and R.~Howard.
\newblock Circles minimize most knot energies.
\newblock {\em Topology}, 42(2):381--394, 2003.

\bibitem{BP}
T.~F. Banchoff and W.~F. Pohl.
\newblock A generalization of the isoperimetric inequality.
\newblock {\em J. Differential Geometry}, 6:175--192, 1971/72.

\bibitem{Crofton:Prob}
M.~W. Crofton.
\newblock {Probability}.
\newblock In {\em {Encyclopaedia Britannica}}, volume~19, pages 768--788. {A \&
  C Black}, 9th edition, 1885.

\bibitem{Crofton:local}
M.~W. Crofton.
\newblock On the theory of local probability.
\newblock {\em Phil. Trans. Roy. Soc. London}, 159:181--199, 1968.

\bibitem{EHL}
P.~Exner, E.~M. Harrell, and M.~Loss.
\newblock Inequalities for means of chords, with application to isoperimetric
  problems.
\newblock {\em Lett. Math. Phys.}, 75(3):225--233, 2006.

\bibitem{FHW:mobius}
M.~H. Freedman, Z.-X. He, and Z.~Wang.
\newblock M\"obius energy of knots and unknots.
\newblock {\em Ann. of Math. (2)}, 139(1):1--50, 1994.

\bibitem{gabor:mean}
L.~G{\'a}bor.
\newblock On the mean length of the chords of a closed curve.
\newblock {\em Israel J. Math.}, 4:23--32, 1966.

\bibitem{guan&shen}
P.~Guan and X.~Shen.
\newblock A rigidity theorem for hypersurfaces in higher dimensional space
  forms.
\newblock {\em arXiv:1306.1581v1}, 2013.

\bibitem{hayashi}
T.~Hayashi.
\newblock The extremal chords of an oval.
\newblock {\em T\^{o}hoku Math. J.}, 22:387----393, 1923.

\bibitem{krantz&parks:book2}
S.~G. Krantz and H.~R. Parks.
\newblock {\em Geometric integration theory}.
\newblock Cornerstones. Birkh\"auser Boston Inc., Boston, MA, 2008.

\bibitem{morgan:book}
F.~Morgan.
\newblock {\em Geometric measure theory}.
\newblock Elsevier/Academic Press, Amsterdam, fourth edition, 2009.
\newblock A beginner's guide.

\bibitem{ohara:energy1}
J.~O'Hara.
\newblock Family of energy functionals of knots.
\newblock {\em Topology Appl.}, 48(2):147--161, 1992.

\bibitem{or:degree}
E.~Outerelo and J.~M. Ruiz.
\newblock {\em Mapping degree theory}, volume 108 of {\em Graduate Studies in
  Mathematics}.
\newblock American Mathematical Society, Providence, RI, 2009.

\bibitem{pogorelov:book}
A.~V. Pogorelov.
\newblock {\em Extrinsic geometry of convex surfaces}.
\newblock American Mathematical Society, Providence, R.I., 1973.
\newblock Translated from the Russian by Israel Program for Scientific
  Translations, Translations of Mathematical Monographs, Vol. 35.

\bibitem{Pohl:Integral-formulas}
W.~F. Pohl.
\newblock Some integral formulas for space curves and their generalization.
\newblock {\em Amer. J. Math.}, 90:1321--1345, 1968.

\bibitem{Rado:Lemma}
T.~Rad{\'o}.
\newblock A lemma on the topological index.
\newblock {\em Fund. Math.}, 27:212--225, 1936.

\bibitem{Rado:Surface-Area}
T.~Rad{\'o}.
\newblock The isoperimetric inequality and the {L}ebesgue definition of surface
  area.
\newblock {\em Trans. Amer. Math. Soc.}, 61:530--555, 1947.

\bibitem{Rado:Book}
T.~Rad{\'o}.
\newblock {\em Length and {A}rea}.
\newblock American Mathematical Society Colloquium Publications, vol. 30.
  American Mathematical Society, New York, 1948.

\bibitem{santalo}
L.~A. Santal{\'o}.
\newblock {\em Integral geometry and geometric probability}.
\newblock Addison-Wesley Publishing Co., Reading, Mass.-London-Amsterdam, 1976.
\newblock With a foreword by Mark Kac, Encyclopedia of Mathematics and its
  Applications, Vol. 1.

\bibitem{scott}
P.~R. Scott and P.~W. Awyong.
\newblock Inequalities for convex sets.
\newblock {\em JIPAM. J. Inequal. Pure Appl. Math.}, 1(1):Article 6, 6 pp.
  (electronic), 2000.

\bibitem{senkin}
E.~P. Sen$'$kin.
\newblock Rigidity of convex hypersurfaces.
\newblock {\em Ukrain. Geometr. Sb.}, (12):131--152, 170, 1972.

\bibitem{spivak:v5}
M.~Spivak.
\newblock {\em A comprehensive introduction to differential geometry. {V}ol.
  {V}}.
\newblock Publish or Perish Inc., Wilmington, Del., second edition, 1979.

\bibitem{vilcu}
C.~V{\^{\i}}lcu.
\newblock On typical degenerate convex surfaces.
\newblock {\em Math. Ann.}, 340(3):543--567, 2008.

\end{thebibliography}

\end{document}